	\def\MR#1{}
\newcommand{\cF}{\mathcal{F}}
\newcommand{\Z}{\mathbb{Z}}
\newcommand{\bQ}{\mathbb{Q}}
\newcommand{\Q}{\mathbb{Q}}
\newtheorem{thm}{Theorem}
\numberwithin{thm}{section}
\newtheorem{conj}[thm]{Conjecture}
\newtheorem{prop}[thm]{Proposition}
\newtheorem{lemma}[thm]{Lemma}
\newtheorem{cor}[thm]{Corollary}
\newtheorem*{namedtheorem}{\theoremname}
\newcommand{\theoremname}{testing}
\theoremstyle{definition}
\newtheorem*{nameddef}{\defname}
\newcommand{\defname}{testing}
\theoremstyle{remark}
\newtheorem{rmk}[thm]{Remark}
\begin{document}
	\title{Special Alternating knots are Band Prime}
	\author{Joe Boninger and Joshua Evan Greene}
	\address{Department of Mathematics, Boston College, Chestnut Hill, MA}
	\email{boninger@bc.edu, joshua.greene@bc.edu}
	\thanks{JB was supported by the National Science Foundation under Award No.~2202704. \\
	JEG was supported by the National Science Foundation under Award No.~DMS-2304856.}
	\maketitle
	
	\begin{abstract}
		We prove that a special alternating knot does not decompose as a non-trivial band sum. This restricts concordances from special alternating knots, and we conjecture that special alternating knots are ribbon concordance minimal. We verify our conjecture in many cases. This work is motivated by another conjecture of Owens and the second author, which posits that the set of alternating knots is downward closed under ribbon concordance.
	\end{abstract}

	\section{Introduction}
	
	A diagram $D \subset S^2$ of a link $K \subset S^3$ is {\em alternating} if a traveller along $D$ alternates between passing over and under crossing points. It is {\em special} if Seifert's algorithm returns one of the chessboard surfaces of $D$ or, equivalently, if one of the chessboard surfaces of $D$ is orientable.
	
	A link $K$ is {\em alternating} if it admits an alternating diagram, and it is {\em special alternating} if it admits a diagram which is both special and alternating.
	Special alternating links and their canonical Seifert surfaces are building blocks for the set of all alternating links under the operation of Murasugi sum.
	They include $(2,n)$-torus links, and they can be obtained more generally from any bipartite planar graph by inverting the Tait graph construction.
	
	Suppose knots $K_0, K_1 \subset S^3$ are separated from each other by a 2-sphere $S^2$, and let $b : I \times I \to S^3$ be an embedded band such that $b^{-1}(K_i) = I \times \{i\}$, $i \in \{0,1\}$.  The {\em band sum} $K = K_0 \#_b K_1 \subset S^3$ is the knot $(K_0 \cup K_1 -b(I \times \partial I)) \cup b(\partial I \times I)$. This band sum is {\em trivial} if $b^{-1}(S^2) = I \times \{1/2\}$, so that $K_0 \#_b K_1$ is equivalent to the connected sum $K_0 \# K_1$. A knot $K$ is {\em band prime} if it cannot be written as a non-trivial band sum.
	
	The main result of this note is the following:
	
	\begin{thm}
		\label{thm:main}
		Special alternating knots are band prime.
	\end{thm}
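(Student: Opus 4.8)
The plan is to leverage the single most rigid feature of a special alternating knot $K$: if $D$ is a special alternating diagram, then Seifert's algorithm returns the orientable chessboard surface $F$, while the complementary chessboard surface has definite Goeritz form. Equivalently, $F$ is a minimal-genus Seifert surface whose symmetrized Seifert form $V + V^{T}$ is (positive or negative) definite, so the signature is extremal: $|\sigma(K)| = 2g(K) = b_1(F)$. This extremality is exactly what fails for a typical non-trivial band sum, and so it is the natural quantity with which to obstruct a non-trivial decomposition. I would also use that, for special alternating knots, $F$ is the unique incompressible minimal-genus Seifert surface up to isotopy.

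Next I would fix a band sum $K = K_0 \#_b K_1$ with separating sphere $S$ bounding balls $V_0 \supset K_0$ and $V_1 \supset K_1$, and record that $K$ is obtained from the split union $K_0 \sqcup K_1$ by the single saddle carried by $b$. Two facts then pin down the genus. First, Gabai's theorem that Seifert genus is superadditive under band sum gives $g(K) \ge g(K_0) + g(K_1)$. Second, a single band move changes the signature by at most one, and the signature of a split union is additive, so $|\sigma(K) - \sigma(K_0) - \sigma(K_1)| \le 1$; combined with $|\sigma(K_i)| \le 2g(K_i)$ and the extremality $|\sigma(K)| = 2g(K)$ this yields $2g(K) \le 2g(K_0) + 2g(K_1) + 1$. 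Since $2g(K)$ is an even integer no smaller than $2(g(K_0)+g(K_1))$, we are forced to have $g(K) = g(K_0) + g(K_1)$, and moreover each $K_i$ inherits the extremal signature $|\sigma(K_i)| = 2g(K_i)$. In particular the band cannot be knotted so as to raise the genus, as it does for a generic band sum, e.g.\ the genus-one slice knot obtained by banding two unknots, which has vanishing signature and hence is not special alternating.

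The remaining task is to upgrade this numerical rigidity to the geometric statement that $b$ meets $S$ in a single arc. Here I would run sutured-manifold theory in the style of Gabai and Scharlemann on the complement of the minimal-genus surface: decomposing along $S$ and using tautness, a minimal-genus Seifert surface for $K$ can be arranged to meet $S$ in a single arc and to restrict to minimal-genus Seifert surfaces $F_i$ for the $K_i$, consistent with $g(K) = g(K_0)+g(K_1)$. By uniqueness this surface is isotopic to the canonical chessboard surface $F$, and now $S$ meets $F$ in one arc and $K$ in two points. A sphere meeting $K$ in two points is a connected-sum sphere, so the decomposition is the trivial band sum $K = K_0 \# K_1$, as desired; as a byproduct each $K_i$ is itself special alternating.

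The step I expect to be the main obstacle is precisely this geometric reduction: controlling the intersection pattern $b^{-1}(S)$ and showing that a band whose core may a priori cross $S$ several times can be isotoped down to a single crossing. This is where the special alternating hypothesis must be used essentially rather than merely numerically — the rigidity of $F$ (its uniqueness and the definiteness of its form, equivalently the definiteness of the complementary Goeritz form) has to be converted into the incompressibility input that lets the sutured-manifold decomposition along $S$ simplify to a connected-sum sphere. Bridging the algebraic statement ``$V + V^{T}$ is definite'' and the geometric conclusion ``the band crosses $S$ once'' is the technical heart of the proof.
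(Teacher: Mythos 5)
Your opening moves are sound and overlap with the paper's: the definiteness of the symmetrized Seifert form of the chessboard surface (via Gordon--Litherland) and the extremality $|\sigma(K)| = 2g(K)$ are exactly the rigidity the paper exploits, and Gabai's theorem is the right geometric input. But the proposal stops at the point where the actual proof begins, and you say so yourself: the ``geometric reduction'' is left as the step you ``expect to be the main obstacle.'' That step is the theorem. Moreover, the route you sketch for it contains a real logical gap: from $g(K) = g(K_0) + g(K_1)$ and $|\sigma(K_i)| = 2g(K_i)$ you cannot conclude that the band meets the separating sphere in a single arc. Even granting that a minimal genus Seifert surface can be arranged to meet the sphere only in a co-core of the band, the inference ``a sphere meeting $K$ in two points is a connected-sum sphere, so the band sum is trivial'' conflates two different statements: exhibiting $K$ as a (possibly trivial) connected sum is strictly weaker than showing $b^{-1}(S^2) = I \times \{1/2\}$, which is what triviality of the band sum requires. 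Nothing in the numerical setup rules out a knotted or swallow-follow band that happens to realize additive genus. (Your signature estimate also needs care: the sharp cobordism inequality for a saddle move between a knot and a two-component split link carries a nullity correction, so the bound may be $2$ rather than $1$, and the parity argument then gives nothing. The paper never needs this step.)

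What the paper does instead, and what is missing from your sketch, is twofold. First, it reduces to the case of \emph{prime} special alternating knots (using Eudave-Mu\~noz's result that a decomposing sphere can be made disjoint from the band, Miyazaki's triviality criterion, and Menasco's theorem that connected summands of alternating knots are alternating). This changes the goal: one no longer needs to trivialize the band, only to derive a contradiction. Second --- and this is the bridge from algebra to geometry that you correctly identify as the heart of the matter but do not supply --- after pushing the separating sphere off the incompressible cut surface $S' = S - \eta(\alpha)$, the symmetrized Seifert form $(H_1(S), \lambda_S)$ acquires a non-trivial orthogonal direct sum decomposition. Since this lattice is the flow lattice $\mathcal{F}(G)$ of the Tait graph, the theorem of Bacher, de la Harpe, and Nagnibeda forces $G$ to be separable, hence the diagram to be a connected sum of two non-trivial diagrams, hence $K$ to be composite --- contradicting primeness. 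It is this rigidity of flow lattices of planar graphs, not uniqueness of the Seifert surface or sutured manifold theory, that converts ``the form is definite and splits'' into a topological conclusion. Without that ingredient (or a substitute for it), your outline does not close.
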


	Our motivation for studying band sums is their relationship to ribbon concordance, which was introduced by Gordon in an influential paper \cite{gor81}. Two knots $K_0$, $K_1$ are (smoothly) {\em concordant} if there exists a smooth, properly embedded annulus $C \subset S^3 \times I$ such that $\partial C \cap (S^3 \times \{i\}) = K_i \times \{ i \}$. The annulus $C$ is called a {\em ribbon concordance from $K_1$ to $K_0$}, and we write $K_0 \leq K_1$, if the restriction $h|_C$ of the height function $h : S^3 \times I \to I$ is Morse and has no critical points of index two. Ribbon concordance generalizes the notion of a ribbon knot, since a knot is ribbon if and only if it is ribbon concordant to the unknot.
	
	Theorem \ref{thm:main} implies:
	
	\begin{cor}
		\label{cor:concord}
		Suppose that $K_1$ is a special alternating knot, and $K_0 \leq K_1$ by a ribbon concordance $C \subset S^3 \times I$ such that $h|_C$ has two critical points. Then $K_0 \simeq K_1$, and $C$ is isotopic to the trivial concordance $K_1 \times I$.
	\end{cor}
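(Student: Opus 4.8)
The plan is to read the structure of $C$ off from Morse theory, to recognize a two-critical-point ribbon concordance as a band sum of $K_0$ with an unknot, and then to apply \refthm{main} to force that band sum to be trivial.

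First I would count critical points. Since $C$ is an annulus, $\chi(C)=0$; since $h|_C$ is Morse with no index-two critical points, the Euler characteristic is computed by
\[
0=\chi(C)=\#\{\text{minima}\}-\#\{\text{saddles}\},
\]
so the numbers of index-zero and index-one critical points agree. With exactly two critical points this gives one minimum and one saddle. Reading $C$ upward from the bottom slice $K_0\times\{0\}$, the number of components of the level link begins and ends at $1$, a minimum raises it by one, and a saddle changes it by $\pm 1$. Tracking this count forces the minimum to lie below the saddle: just above the minimum the level link is $K_0\sqcup U$, where $U$ is a small circle bounding a disk inside a ball disjoint from $K_0$, and (since there are no intervening critical values) this configuration persists up to the saddle, which must be a band $b$ fusing the two components $K_0$ and $U$ (a band with both feet on a single component would split it and leave two components at the top slice). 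Hence $K_1=K_0\#_b U$, and since $U$ is an unknot contained in a ball disjoint from $K_0$ this is a band sum in the sense of the Introduction, with separating sphere the boundary of that ball.

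Next I would invoke \refthm{main}. As $K_1$ is special alternating it is band prime, so the decomposition $K_1=K_0\#_b U$ must be trivial, meaning $b^{-1}(S^2)=I\times\{1/2\}$ and $K_1\simeq K_0\# U$. Because $U$ is the unknot, $K_0\# U\simeq K_0$, and therefore $K_0\simeq K_1$, which is the first assertion of the corollary.

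Finally I would promote this to the statement about $C$. Triviality of the band sum means that on the $U$-side of the separating sphere the knot $K_1$ meets it in a single boundary-parallel arc, so the birth of $U$ together with the trivial fusion band is a canceling $(0,1)$-handle pair in the movie presentation of $C$. Cancelling this pair yields an ambient isotopy of $S^3\times I$ carrying $C$ to a concordance with no critical points; after the isotopy of $S^3$ identifying $K_0$ with $K_1$, such a concordance is the product $K_1\times I$. I expect this last step to be the main obstacle: the handle cancellation is morally routine, but one must realize it by an isotopy that is controlled rel boundary, and this is where the triviality of the band $b$ (unknotted and meeting the separating sphere in one arc) must be used to guarantee the cancellation is unobstructed.
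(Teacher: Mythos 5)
Your proposal is correct and follows exactly the paper's argument: the paper's proof likewise observes via Morse theory that $h|_C$ has one birth and one saddle, identifies $K_1$ as a band sum of $K_0$ with an unknot, and concludes by Theorem~\ref{thm:main}. You simply spell out the details (the Euler characteristic count, the ordering of critical values, and the cancellation of the birth--saddle pair) that the paper leaves implicit.
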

	
	\begin{proof}
		Morse theory implies $h|_C$ has one critical point of index $0$---a birth---and one of index $1$---a saddle. Equivalently, the knot $K_1$ is a band sum of $K_0$ and an unknot, and the result follows from Theorem \ref{thm:main}.
	\end{proof}
	
	Agol \cite{ago22} proved ribbon concordance forms a partial ordering on the set of all knots in $S^3$, resolving a long open conjecture of Gordon \cite[Conjecture 1.1]{gor81}. This ordering has important properties: for instance, if $K_0 \leq K_1$, then the Seifert genus of $K_0$ is less than or equal to that of $K_1$ \cite{zem19}; the Alexander polynomial of $K_0$ divides that of $K_1$ \cite{gil84,fp20}; and if $K_1$ is fibered, then so is $K_0$  \cite{miy18, zem19}. In this vein, the second author and Owens conjectured that if $K_0 \leq K_1$ and $K_1$ is alternating, then so is $K_0$ \cite[Conjecture 9]{grow23}.
	
	Motivated by this conjecture and by \Cref{cor:concord}, we propose:
	
	\begin{conj}
		\label{conj:spec_min}
		Special alternating knots are ribbon concordance minimal.
	\end{conj}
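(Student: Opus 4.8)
The plan is to build on \Cref{thm:main} and \Cref{cor:concord}, which settle the case of a ribbon concordance with exactly two critical points, and to attack the general case through the Morse-theoretic description of an arbitrary ribbon concordance. Suppose $K_0 \leq K_1$ with $K_1$ special alternating, realized by a ribbon concordance $C \subset S^3 \times I$ on which $h|_C$ is Morse with no index-two critical points. Since $C$ is an annulus, $\chi(C) = 0$, so the births (index $0$) and saddles (index $1$) occur in equal number, say $n$; equivalently, $K_1$ is obtained from $K_0 \sqcup U_1 \sqcup \cdots \sqcup U_n$, the disjoint union of $K_0$ with $n$ unknots, by $n$ band surgeries that fuse everything into a single knot. \Cref{thm:main} is the case $n = 1$, and the goal is to promote it to all $n$.

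The first step is to pin down the classical invariants of $K_0$ as tightly as possible. Because ribbon concordance implies concordance, the signature is preserved: $\sigma(K_0) = \sigma(K_1)$. A special alternating knot has definite symmetrized Seifert form (its Goeritz form is the definite lattice of the underlying bipartite planar graph), so $|\sigma(K_1)| = 2g(K_1)$. Combining this with Zemke's genus inequality $g(K_0) \leq g(K_1)$ \cite{zem19} and the universal bound $|\sigma(K_0)| \leq 2g(K_0)$ forces $g(K_0) = g(K_1)$ and $|\sigma(K_0)| = 2g(K_0)$; thus $K_0$ shares both the genus and the definiteness of $K_1$. Feeding in the Alexander-polynomial divisibility $\Delta_{K_0} \mid \Delta_{K_1}$, together with the equality of degrees that is forced once $K_0$ is known to be alternating, one expects $\Delta_{K_0} = \Delta_{K_1}$, and the injectivity of the map on knot Floer homology induced by a ribbon concordance then shows that the (thin) groups $\widehat{HFK}(K_0)$ and $\widehat{HFK}(K_1)$ agree.

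The hard part is twofold. First, the argument above does not by itself reduce $n$: a ribbon concordance need not factor through an intermediate special alternating knot from which a single band can be peeled off, so there is no direct induction on the number of critical pairs. Second, and more fundamentally, coincidence of $g$, $\sigma$, $\Delta$, and $\widehat{HFK}$ does not imply $K_0 \simeq K_1$. To close this gap I would pass to the double branched cover: $\Sigma_2(K_1)$ bounds the definite $4$-manifold built from the black chessboard surface, and the ribbon concordance equips $\Sigma_2(K_0)$ with a rational homology cobordism to $\Sigma_2(K_1)$. Capping off and applying Donaldson's diagonalization theorem---in the spirit of the lattice-embedding obstructions behind the Greene--Owens conjecture \cite{grow23}---should constrain the Goeritz lattice of $K_0$ to embed in that of $K_1$ rigidly enough to recover $K_1$ exactly. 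This is where I expect a verification to succeed only in favorable cases: when $K_1$ is fibered, when $g(K_1) = 1$, or when the Goeritz lattice of $K_1$ admits no proper definite sublattice of equal rank, the constraints collapse and force $K_0 \simeq K_1$. The general case, which requires controlling all $n$ births and saddles simultaneously, is the principal obstacle and the reason the statement is posed as a conjecture rather than proved outright.
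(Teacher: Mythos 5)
The statement you are addressing is posed in the paper as a conjecture, not a theorem, and you correctly recognize that no complete proof is available; your writeup is an honest research plan rather than a proof, so the right comparison is between your partial progress and the paper's. The invariant coincidences you derive---$\sigma$, $g$, $\Delta$, and the thin groups $\widehat{HFK}$---are exactly the content of \Cref{thm:ev_one}, and your derivation is essentially the paper's, with one small slip: you invoke ``the equality of degrees that is forced once $K_0$ is known to be alternating,'' but $K_0$ being alternating is precisely the (open) Greene--Owens conjecture and cannot be assumed. The paper instead gets $\operatorname{span}(\Delta_{K_0}) = 2g(K_0)$ from homological thinness of $K_0$, which follows from Zemke's injectivity into the thin group $\widehat{HFK}(K_1)$; you should route the argument that way.

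Where you genuinely diverge is in the strategy for closing the gap. You propose passing to double branched covers and using Donaldson's diagonalization theorem to rigidify a lattice embedding of the Goeritz form of $K_0$ into that of $K_1$. The paper instead uses Gordon's group-theoretic criterion: a knot that is $\Q$-anisotropic and transfinitely nilpotent is ribbon concordance minimal. \Cref{lem:qanis} shows every special alternating knot is $\Q$-anisotropic (the Milnor form is definite, by the same $|\sigma| = \operatorname{span}(\Delta)$ identity you use), so the conjecture reduces to transfinite nilpotence of the commutator subgroup---a question Gordon already raised for all alternating knots. This yields the verified cases of \Cref{thm:ev_two}: fibered, prime-power leading Alexander coefficient, and two-bridge, which differ from your favorable cases (fibered, genus one, rigid Goeritz lattice). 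A caution about your route: since $\det(K_0) = \det(K_1)$ and the relevant definite lattices have equal rank and discriminant, a lattice embedding between them is automatically an isomorphism, so the diagonalization obstruction is likely to return no new information beyond \Cref{thm:ev_one}; moreover, recovering the knot from its Goeritz lattice requires knowing $K_0$ has a definite spanning surface in the first place, which is again essentially the alternating-detection problem. The paper's reduction to transfinite nilpotence is the sharper open target.
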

	\noindent
	That is, if $K_1$ is special alternating, and $K_0 \le K_1$, then $K_0 \simeq K_1$.
	Note that a positive answer to \cite[Conjecture 9]{grow23} does not directly imply one to \Cref{conj:spec_min}.
	On the other hand, \Cref{conj:spec_min} implies \Cref{thm:main}, because Miyazaki proved that ribbon concordance minimal knots are band prime: for a prime knot, this is \cite[Theorem 3]{miy98}, and for a composite knot, this additionally requires \cite[Theorem 1]{miy20}.	
		
	We report on some more evidence in support of \Cref{conj:spec_min}.  One piece of evidence follows quickly from work of Zemke \cite{zem19}:
	
	\begin{prop}
		\label{thm:ev_one}
		Suppose $K$ is a special alternating knot, and $K' \leq K$. Then
		$$
		\widehat{HFK}(K') \cong \widehat{HFK}(K).
		$$
		In particular, the two knots have the same genus, Alexander polynomial, and determinant.
	\end{prop}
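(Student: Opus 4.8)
The plan is to deduce the isomorphism from Zemke's injectivity theorem, the thinness of alternating knots, and the definiteness of the Seifert form of a special alternating knot. First I would invoke \cite{zem19}: the ribbon concordance $K' \le K$ induces a bigraded split injection on knot Floer homology, so that $\widehat{HFK}(K) \cong \widehat{HFK}(K') \oplus Q$ as bigraded groups for some complementary summand $Q$. The entire statement is then equivalent to showing $Q = 0$, and all of the numerical ``in particular'' conclusions follow from the isomorphism.

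Second I would use that $K$, being special alternating, is in particular alternating, so by Ozsv\'ath and Szab\'o $\widehat{HFK}(K)$ is \emph{thin}: it is supported on a single diagonal $m - s = \delta$. A bigraded summand of a thin group is thin, so $\widehat{HFK}(K')$ and $Q$ are thin on the same diagonal. Thinness forces the homology in each Alexander grading to be concentrated in a single Maslov grading, so its rank equals the absolute value of the corresponding coefficient of the Alexander polynomial; since the signs of these coefficients alternate on a single diagonal, summing gives $\operatorname{rank}\widehat{HFK}(K) = \det(K)$ and likewise $\operatorname{rank}\widehat{HFK}(K') = \det(K')$. Thus $Q = 0$ is equivalent to $\det(K') = \det(K)$, and I would obtain the latter by proving $\Delta_{K'} = \Delta_K$. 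To that end I would combine the divisibility $\Delta_{K'} \mid \Delta_K$ of \cite{gil84,fp20} with equality of Seifert genera $g(K') = g(K)$: granting the genus equality, thinness makes knot Floer homology detect the genus exactly, so $\deg \Delta_{K'} = g(K') = g(K) = \deg \Delta_K$, whence the divisor $\Delta_{K'}$ of $\Delta_K$ has the same degree as $\Delta_K$, the quotient is a unit, and the normalization $\Delta(1) = 1$ forces $\Delta_{K'} = \Delta_K$, hence $\det(K') = \det(K)$ and $Q = 0$.

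The step I expect to carry the real weight is the reverse genus bound $g(K') \ge g(K)$, since Zemke's theorem only yields $g(K') \le g(K)$. Here the hypothesis ``special'' (not merely ``alternating'') is essential: the orientable checkerboard surface of a special alternating diagram realizes the Seifert genus and carries a definite symmetrized Seifert form, so that $|\sigma(K)| = 2g(K)$ and hence, for the alternating knot $K$, $|\tau(K)| = g(K)$. Because $\tau$ is a smooth concordance invariant we have $\tau(K') = \tau(K)$; combined with the general bound $g(K') \ge |\tau(K')|$ this gives $g(K') \ge |\tau(K)| = g(K)$, and with Zemke's inequality $g(K') = g(K)$. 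Establishing the definiteness of the Seifert form of a special alternating knot --- equivalently, the extremality $|\sigma| = 2g$ --- is the one place where the special alternating structure is used in an essential way, and I regard it as the crux of the argument.
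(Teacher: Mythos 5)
Your proposal is correct and follows essentially the same route as the paper: Zemke's injectivity, thinness of alternating knots, the genus equality driven by $|\sigma(K)| = 2g(K)$ for special alternating knots (which you rightly identify as the crux), and then Alexander polynomial divisibility plus the normalization $\Delta(1)=1$. The only difference is cosmetic: where you pass through $\tau(K) = -\sigma(K)/2$ and $|\tau| \le g_4 \le g$ to get $g(K') \ge g(K)$, the paper gets the same inequality more elementarily from concordance invariance of $\sigma$ together with the classical bound $|\sigma(K')| \le 2g(K')$.
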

	\noindent
	Here $\widehat{HFK}(K)$ denotes the hat flavor of the knot Floer homology of $K$ with $\mathbb{F}_2$ coefficients.
	
	More evidence draws on the notions of $\bQ$-anisotropy and transfinite nilpotence, which we recall in Section 3.
	Gordon proved that any knot which is $\Q$-anisotropic and transfinitely nilpotent is ribbon concordance minimal, and he asked whether alternating knots are transfinitely nilpotent \cite[Theorem 1.3 and Question 5.2]{gor81}.
	An elementary argument shows that a special alternating knot is $\bQ$-anisotropic (\Cref{lem:qanis} below).
	Thus, a positive answer to Gordon's question would imply a positive answer to \Cref{conj:spec_min}.
	Transfinite nilpotence is known in certain circumstances, which leads to the following result:
	
	\begin{prop}
		\label{thm:ev_two}
		Suppose $K$ is a special alternating knot such that
		\begin{enumerate}
			\item $K$ is fibered,
			\item the leading coefficient of its Alexander polynomial  is a prime power, or
			\item $K$ is a two-bridge knot.
		\end{enumerate}
		Then $K$ is ribbon concordance minimal.
	\end{prop}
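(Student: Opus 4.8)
The plan is to verify the two hypotheses of Gordon's ribbon-concordance-minimality criterion—$\Q$-anisotropy and transfinite nilpotence—and then invoke \cite[Theorem 1.3]{gor81}. The first hypothesis is uniform across all three cases: since $K$ is special alternating, \Cref{lem:qanis} shows it is $\Q$-anisotropic. Thus the entire content is to establish, in each of the three cases, that $K$ is transfinitely nilpotent, i.e.\ that the transfinite lower central series of $\pi_1$ of the infinite cyclic cover $X_\infty$---equivalently of the commutator subgroup $[\pi,\pi]$ of the knot group $\pi = \pi_1(S^3 \setminus K)$---terminates at the trivial group.

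Case (1) is the cleanest. If $K$ is fibered with fiber $F$, then $X_\infty \simeq F$, so $\pi_1(X_\infty)$ is free of rank $2g(K)$. By Magnus's theorem a free group is residually nilpotent, so its ordinary, hence transfinite, lower central series reaches the identity at stage $\omega$; thus $K$ is transfinitely nilpotent. Cases (2) and (3) are where the real input lies, and each rests on controlling the transfinite lower central series of $[\pi,\pi]$ through the Alexander module $A = H_1(X_\infty)$, whose successive quotients govern the associated graded of the lower central series. For (2), I would invoke the known result that a prime-power leading coefficient of $\Delta_K$ suffices for transfinite nilpotence; its proof analyzes $A$ one prime at a time, a $p$-adic (pro-$p$) argument forcing the relevant filtration on $[\pi,\pi]$ to separate points. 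For (3), I would use the special feature that two-bridge knots have cyclic Alexander module $A \cong \Lambda/(\Delta_K)$ over $\Lambda = \Z[t^{\pm 1}]$; this rigid module structure, together with the explicit two-generator one-relator presentation of a two-bridge group, is enough to run the transfinite computation to the identity. In both cases I would cite the existing results that package these computations rather than redo them.

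The main obstacle is precisely the transfinite nilpotence in cases (2) and (3): unlike the fibered case, here $[\pi,\pi]$ is infinitely generated and not free, so one cannot appeal to residual nilpotence of free groups, and must instead confirm that the hypotheses---prime-power leading coefficient, respectively cyclicity of the Alexander module for two-bridge knots---are exactly what is needed to push the transfinite series to the identity. Once transfinite nilpotence is in hand the conclusion is immediate: \Cref{lem:qanis} supplies $\Q$-anisotropy, and \cite[Theorem 1.3]{gor81} then yields ribbon concordance minimality.
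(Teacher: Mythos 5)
Your proposal is correct and follows essentially the same route as the paper: $\Q$-anisotropy from \Cref{lem:qanis}, transfinite nilpotence case by case (free commutator subgroup for fibered knots, the Mayland--Murasugi theorem for prime-power leading coefficient, and Johnson's result for two-bridge knots), and then Gordon's criterion. The only difference is that the paper names the specific references for cases (2) and (3), which your proposal correctly anticipates but leaves as citations to be filled in.
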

	
	We close the introduction with a conjecture strengthening \Cref{thm:main} and \Cref{conj:spec_min} and a piece of evidence for it.
	A knot $K \subset S^3$ is {\em positive} if it admits an oriented diagram in which every crossing is positive.
	If we orient a special alternating diagram, then every crossing has the same sign, positive or negative.
	Thus, up to mirroring, every special alternating knot is a positive knot.
	Nakamura proved that special alternating knots are the knots which are both alternating and positive up to mirroring \cite[Theorem 1]{nak00}.
	Extending \Cref{conj:spec_min}, we posit:	
	\begin{conj}
		Positive knots are ribbon concordance minimal.
	\end{conj}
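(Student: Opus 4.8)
The plan is to deduce the conjecture from Gordon's criterion, just as in the proof of \Cref{thm:ev_two}: it suffices to show that every positive knot is both $\bQ$-anisotropic and transfinitely nilpotent. The first condition controls the rational Alexander module and Blanchfield pairing of $K$, while the second controls its fundamental group; together they force a ribbon concordance $K_0 \le K$ with $K$ positive to be an isotopy. This framework is the right one because, as \Cref{thm:ev_one} already illustrates, the soft invariants alone (genus, Alexander polynomial, and even $\widehat{HFK}$) do not suffice to pin down $K_0$, so one needs the rigidity at the level of $\pi_1$ that transfinite nilpotence supplies.

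First we would establish $\bQ$-anisotropy. For special alternating knots this is the elementary observation (\Cref{lem:qanis}) that the canonical Seifert form is definite, so the rational Blanchfield form admits no nonzero self-annihilating vector. This argument does not survive the passage to general positive knots: the Seifert form of a positive knot is typically indefinite — torus knots already have $|\sigma| < 2g$ in general — so a new input is required. The natural replacement is to exploit the strong sign constraints that positivity imposes on the signature invariants of $K$ and to feed them into the primary decomposition of the rational Blanchfield form, ruling out isotropic vectors summand by summand (note that over the relevant CM-type coefficient fields anisotropy is strictly weaker than definiteness, so there is room for this to hold even when the form is indefinite). We expect this step to be technical but tractable, and it holds in all the examples we have checked, for instance for torus knots.

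The main obstacle is transfinite nilpotence, which is exactly Gordon's open question of whether alternating knots are transfinitely nilpotent, now posed for the larger class of positive knots. The three hypotheses of \Cref{thm:ev_two} each guarantee transfinite nilpotence for reasons independent of the alternating condition. The two-bridge hypothesis adds nothing new here, since a positive two-bridge knot is automatically special alternating; but the fibered and prime-power hypotheses genuinely enlarge the class. In particular, once the anisotropy step is in place, every \emph{fibered} positive knot would be ribbon concordance minimal, including all torus knots and, more generally, all closures of positive braids. The general case, however, requires transfinite nilpotence for an arbitrary positive knot, and this appears to be out of reach of present techniques — it is the crux of the conjecture. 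Since ribbon concordance minimal knots are band prime, a resolution would in particular extend \Cref{thm:main} from special alternating to all positive knots.
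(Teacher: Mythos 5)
The statement you are addressing is stated in the paper as a \emph{conjecture}, and the paper does not prove it; it only offers partial evidence. Your proposal likewise does not prove it: both pillars of your strategy are left open, so what you have written is a research program rather than a proof. Concretely, (i) your $\bQ$-anisotropy step for general positive knots has no argument behind it --- you correctly observe that the definiteness argument of \Cref{lem:qanis} fails once $|\sigma(K)| < 2g(K)$, but the proposed replacement (``exploit sign constraints on signature invariants and feed them into the primary decomposition of the Blanchfield form'') is a hope, not a proof, and it is not clear that anisotropy even holds for an arbitrary positive knot whose Alexander polynomial has repeated roots on the unit circle; and (ii) the transfinite nilpotence of positive knot groups is, as you acknowledge, completely open and is a strictly harder variant of Gordon's own open question for alternating knots. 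A sketch whose two essential steps are each unestablished, one of them admittedly out of reach, is a genuine gap in its entirety.

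It is also worth correcting one point of emphasis. You write that ``once the anisotropy step is in place, every fibered positive knot would be ribbon concordance minimal.'' In fact the fibered case is already a theorem in the paper (\Cref{prop:ev_three}) and requires neither anisotropy nor transfinite nilpotence: by Rudolph, a positive knot satisfies $g(K) = g_4(K)$; combined with Zemke's genus monotonicity under ribbon concordance this forces a ribbon concordance from a fibered positive knot to be genus-preserving, and Miyazaki's observation then shows it is trivial. This soft argument is both complete and independent of the Gordon framework you propose, and it already covers torus knots and closures of positive braids. If you want to make progress on the conjecture beyond what the paper records, the honest target is the non-fibered case, and there your proposal currently offers no traction.
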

	
	\noindent
	Other examples of positive knots include torus knots, which Gordon showed are ribbon concordance minimal \cite[Remark before Theorem 1.3]{gor81}. More broadly:
	\begin{prop}[cf.~\protect\cite{bamo17}]
		\label{prop:ev_three}
	Fibered positive knots are ribbon concordance minimal.
	\end{prop}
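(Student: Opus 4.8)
The plan is to reduce to Gordon's criterion \cite[Theorem 1.3]{gor81}: a knot that is simultaneously $\bQ$-anisotropic and transfinitely nilpotent is ribbon concordance minimal. So, given a fibered positive knot $K$, I would show that the fibered hypothesis supplies transfinite nilpotence and the positive hypothesis supplies $\bQ$-anisotropy. This parallels the special alternating case, where fiberedness gives nilpotence in \Cref{thm:ev_two}(1) and definiteness gives anisotropy in \Cref{lem:qanis}, but the anisotropy step here requires a genuinely different argument.

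For transfinite nilpotence I would use only that $K$ is fibered: its complement fibers over $S^1$ with fiber a once-punctured genus-$g$ surface, so the commutator subgroup $G' = [\pi_1(S^3 \setminus K), \pi_1(S^3 \setminus K)]$ is free of rank $2g$. By Magnus's theorem a free group is residually nilpotent, so its transfinite lower central series reaches the trivial subgroup (at stage $\omega$); hence $K$ is transfinitely nilpotent. This is exactly the mechanism behind Gordon's treatment of torus knots and behind \Cref{thm:ev_two}(1), and it makes no use of positivity.

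The positivity enters in establishing $\bQ$-anisotropy, and this is the crux. For a special alternating knot the canonical Seifert form is definite, which at once forbids a nonzero isotropic submodule of the rational Blanchfield pairing; that is the elementary content of \Cref{lem:qanis}. This shortcut is unavailable here, since a positive knot need not have definite symmetrized Seifert form: the torus knots $T(p,q)$ are positive and fibered, yet already $\sigma(T(3,7)) = -8$ while $2g = (3-1)(7-1) = 12$, so the form is indefinite. Instead I would analyze the rational Blanchfield pairing directly, decomposing it over the symmetric irreducible factors of $\Delta_K$ and showing each local summand is anisotropic. By the standard dictionary between Blanchfield forms and Tristram--Levine signatures, this amounts to showing that the signature function jumps maximally at each unit-circle root of $\Delta_K$, a condition that positivity (as opposed to mere fiberedness) is meant to force; this is the content I would extract from \cite{bamo17}. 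Combining anisotropy with nilpotence, Gordon's criterion then yields that $K$ is ribbon concordance minimal.

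The main obstacle is this $\bQ$-anisotropy step. Because it cannot rest on definiteness, the argument must control the full rational Blanchfield form rather than a single definite pairing, and the delicate point is to verify \emph{uniformly} across all symmetric factors of the Alexander polynomial that positivity produces the maximal signature jumps. Pinning down precisely where the positive hypothesis is used, and confirming it is strong enough to rule out every isotropic submodule, is where the real work lies.
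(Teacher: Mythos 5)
There is a genuine gap. Your reduction to Gordon's criterion founders exactly where you admit the real work lies: you never establish that a fibered positive knot is $\bQ$-anisotropic, and this is not a routine verification. As you correctly observe, the Seifert form of a positive knot need not be definite (your $T(3,7)$ example is apt), so the mechanism of \Cref{lem:qanis} is unavailable; but the replacement you sketch --- that positivity forces the Tristram--Levine signature function to jump maximally at every unit-circle root of $\Delta_K$ --- is not a known consequence of positivity, and you give no argument for it. Even granting it, maximal jumps would only control the summands of the Milnor form attached to symmetric irreducible factors with roots on the unit circle; anisotropy of the summands coming from factors without unit-circle roots is invisible to signatures and would need a separate argument. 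So the crux of your proof rests on an unproved and quite strong assertion. The transfinite nilpotence half is fine and is exactly the mechanism used for item (1) of \Cref{thm:ev_two}.

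The paper avoids all of this by a much softer route. If $K_0 \le K_1$ with $K_1$ fibered and positive, then $K_0$ is fibered and $g(K_0) \le g(K_1)$ by the results of Zemke and Miyazaki quoted in the introduction. Rudolph's work gives $g(K_1) = g_4(K_1)$ for positive knots, and since the smooth four-genus is a concordance invariant,
$$
g(K_1) = g_4(K_1) = g_4(K_0) \le g(K_0) \le g(K_1),
$$
forcing $g(K_0) = g(K_1)$. Finally, Miyazaki observed that a genus-preserving ribbon concordance between fibered knots is trivial, so $K_0 \simeq K_1$. Positivity enters only through the equality of Seifert genus and smooth four-genus, not through any anisotropy of the Blanchfield or Milnor pairing. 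To complete your argument you would need to either prove $\bQ$-anisotropy for fibered positive knots outright or switch to this genus comparison.
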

	\begin{proof}
	Suppose that $K_0 \leq K_1$ and $K_1$ is fibered and positive.
	As noted above, $K_0$ is fibered and $g(K_0) \le g(K_1)$, where $g$ denotes the Seifert genus.
	Since $K_1$ is positive, work of Rudolph implies that $g(K_1) = g_4(K_1)$, where $g_4$ denotes the smooth $4$-genus \cite{rudolph93,rudolph99}.
	Hence
	$$
	g(K_1) = g_4(K_1) = g_4(K_0) \leq g(K_0) \leq g(K_1),
	$$
	implying that $g(K_0) = g(K_1)$.
	Miyazaki observed that a genus-preserving ribbon concordance between fibered knots is trivial \cite[Remark before Proposition 5]{miy18}.
	Hence $K_0 \simeq K_1$.
	\end{proof}
	
	\Cref{prop:ev_three} implies item (1) of \Cref{thm:ev_two}; below, we give an alternate proof of the latter using classical methods. Concordance of positive knots has also been studied by Baker \cite{bak16} and Stoimenow \cite{sto15}.	In particular, a positive solution to both \cite[Question 6.1]{gor81}, which strengthens the slice-ribbon conjecture, and to \Cref{conj:spec_min} would answer both \cite[Question 7]{bak16} and \cite[Question 4.2]{sto15}.
	\subsection*{Outline}
	
	In Section \ref{sec:two}, we prove Theorem \ref{thm:main}. In Section \ref{sec:three}, we prove Propositions \ref{thm:ev_one} and \ref{thm:ev_two}.
	
	\section{Proof of the Main Theorem}
	\label{sec:two}
	
	We begin by reducing the proof of \Cref{thm:main} to the case of prime special alternating knots; this is possible thanks to results of Eudave-Mu\~noz \cite{em92} and Menasco \cite{m84}.
	
	\begin{lemma}
		\label{lem:decompose}
		If a composite knot is not band prime, then one of its factors is not band prime.
	\end{lemma}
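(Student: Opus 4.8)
The plan is to establish the contrapositive content, namely that band primeness is preserved under connected sum, by showing that any nontrivial band-sum presentation of a composite knot can be pushed into a single connected-sum factor. So I would suppose $K$ is composite, fix a prime decomposition $K = J_1 \# \cdots \# J_n$ with $n \ge 2$, and assume $K$ is not band prime, so that $K = K_0 \#_b K_1$ is a nontrivial band sum: here $K_0$ and $K_1$ are separated by a $2$-sphere $S$, the band $b(I \times I)$ meets $S$, and nontriviality means $b^{-1}(S) \neq I \times \{1/2\}$. The goal is to produce some $J_i$ that is itself a nontrivial band sum. I would run an induction on $n$, peeling off one prime factor at a time.

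For the core step, choose a decomposing sphere $P$ meeting $K$ transversely in two points and cutting $K$ as $J \# K'$ with $J$ prime. I would put $P$ in general position with respect to $S$ and the band disk $b(I \times I)$ and apply the standard innermost-circle and outermost-arc moves to simplify $P \cap S$ and $P \cap b$. This is exactly the configuration analyzed by Eudave-Mu\~noz \cite{em92} for band sums yielding composite knots, and the outcome of that analysis is a normal form in which the band sum \emph{localizes} to one side of $P$: the sphere meets the band in at most a single arc, each ball bounded by $P$ inherits a band-sum presentation, and $K$ is the connected sum of these two localized band sums.

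With localization in hand, I would argue as follows: if both localized band sums were trivial, then $K_0 \#_b K_1$ would itself be a connected sum, hence a trivial band sum; since it is not, at least one side carries a nontrivial band sum. If the $J$-side does, then the prime factor $J$ is not band prime and we are done. Otherwise the $K'$-side carries a nontrivial band sum, so $K'$ is not band prime; if $K'$ is prime we are done, and otherwise the inductive hypothesis applied to $K'$, which has fewer prime factors, yields a non-band-prime prime factor of $K'$, hence of $K$.

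The main obstacle is the localization step: showing that $P$ can be isotoped into the stated normal form, and that the band sum inherited by the chosen factor genuinely remains nontrivial rather than collapsing to a connected sum. This requires ruling out configurations in which the band essentially links $P$ and in which reducing $P \cap S$ creates an essential sphere or annulus; controlling these is precisely the technical heart of Eudave-Mu\~noz's work, with Menasco's incompressibility results \cite{m84} available to handle the essential surfaces that can arise.
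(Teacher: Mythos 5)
Your overall strategy coincides with the paper's: use Eudave-Mu\~noz's analysis to position a decomposing sphere so that the band sum localizes to one side, then argue that the localized band sum must be nontrivial. But there is a genuine gap at the triviality step. You write that if the localized band sums were trivial then $K_0 \#_b K_1$ ``would itself be a connected sum, hence a trivial band sum.'' Triviality of a band sum is defined by the position of the band relative to the separating sphere (namely $b^{-1}(S^2) = I \times \{1/2\}$), not by the knot type of the result, so the implication ``$K_0 \#_b K_1$ is equivalent to the knot $K_0 \# K_1$, therefore the band sum is trivial'' is not a tautology --- a priori a band that genuinely crosses the sphere could still produce the connected sum. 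That implication is precisely \cite[Theorem 1]{miy20}, and the paper invokes it at exactly this point to close the contradiction. Without it, your argument that ``at least one side carries a nontrivial band sum'' does not go through.

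Two smaller points. First, \cite[Theorem 2]{em92} actually produces a decomposing sphere \emph{disjoint} from the band, which is cleaner than the normal form you describe (sphere meeting the band in at most one arc, with both balls inheriting band-sum presentations): with the sphere disjoint from $b$, the band lies entirely in one ball, so $K = (K_0 \#_b K_A) \# K_B$ and only one factor carries a band sum; that factor is the desired non-band-prime factor, and no induction on prime factors is needed inside the lemma (the iteration is deferred to \Cref{cor:prime}). Note also that Eudave-Mu\~noz supplies \emph{some} decomposing sphere disjoint from the band; you cannot fix a sphere cutting off a chosen prime factor $J$ in advance and expect to normalize it while preserving that factor, so your induction should be run on whatever decomposition the theorem hands you. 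Second, Menasco's results \cite{m84} play no role in this lemma; in the paper they are used only in \Cref{lem:decompose2}, for the diagrammatic connected sum of alternating diagrams.
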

	
	\begin{proof}
		Suppose a composite knot $K$ can be written as a non-trivial band sum $K_0 \#_b K_1$. Eudave-Mu\~noz showed that there exists a decomposing sphere $F \subset S^3$ for $K$ (as a composite knot) which is disjoint from the band $b$ \cite[Theorem 2]{em92}.
		This implies
		$$
		2 = |F \cap K| = |F \cap (K_0 \sqcup K_1)|,
		$$
		and therefore $F$ is a (possibly trivial) decomposing sphere for either $K_0$ or $K_1$. We assume the latter without loss of generality, and let $K_1 = K_A \# K_B$ be the decomposition of $K_1$ determined by $F$. Since $F$ is disjoint from $b$, $b$ is contained on one side of $F$, which we assume is the side containing $K_A$. Equivalently, we have
		$$
		K = (K_0 \#_b K_A) \# K_B.
		$$
		If the band sum $K_0 \#_b K_A$ is trivial, then
		$$
		K = (K_0 \# K_A) \# K_B = K_0 \# (K_A \# K_B) = K_0 \# K_1.
		$$
		By an observation of Miyazaki following Eudave-Mu\~noz's work, this implies the band sum $K_0 \#_b K_1$ is trivial \cite[Theorem 1]{miy20}. This contradiction shows the band sum $K_0 \#_b K_A$ is nontrivial, so $K$ admits a factor which is not band prime.
	\end{proof}
	
	\begin{lemma}
		\label{lem:decompose2}
		If a special alternating knot $K$ decomposes as a connected sum $K_0 \# K_1$, then each of $K_0$ and $K_1$ are special alternating.
	\end{lemma}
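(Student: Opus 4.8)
The plan is to extract the connected-sum decomposition directly from a special alternating diagram and then to track the orientability of the relevant chessboard surface through that decomposition. First I would fix a reduced special alternating diagram $D$ of $K$ and let $F$ be the orientable chessboard surface returned by Seifert's algorithm; say $F$ is the black surface. The key reformulation is that, for an alternating diagram, the black chessboard surface is orientable precisely when the associated black Tait graph $G$ is bipartite. Indeed, one tries to orient the black disks compatibly across the half-twisted bands at the crossings; because the diagram is alternating these bands all have the same handedness, so a compatible orientation exists exactly when the disks admit a $2$-coloring in which band-adjacent disks receive opposite colors. Thus $D$ being special alternating is equivalent to $G$ being bipartite.

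Next I would invoke Menasco's primeness theorem for alternating links \cite{m84}. Since $K = K_0 \# K_1$ is composite, the reduced alternating diagram $D$ is visibly composite, so there is a decomposing circle $\gamma \subset S^2$ meeting $D$ transversally in two points whose two sides cap off to reduced alternating diagrams $D_0$ and $D_1$ of $K_0$ and $K_1$. This circle runs through exactly one black region $B$ and one white region, avoiding all crossings. The arc $\gamma \cap B$ splits the vertex of $G$ corresponding to $B$ into two, while every crossing and every other black region lies on a single side of $\gamma$; consequently $G$ is obtained from the Tait graphs $G_0$ and $G_1$ of $D_0$ and $D_1$ by gluing them at a single cut vertex, and in particular $G_0$ and $G_1$ embed as subgraphs of $G$.

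Since any subgraph of a bipartite graph is bipartite, $G_0$ and $G_1$ are bipartite, so $D_0$ and $D_1$ are special; being alternating, they exhibit $K_0$ and $K_1$ as special alternating knots. Equivalently, one can argue on the surface side: $\gamma$ meets $F$ in the single properly embedded arc $\gamma \cap B$, and cutting the orientable surface $F$ along this arc yields the orientable black chessboard surfaces of $D_0$ and $D_1$. The step demanding the most care is this dictionary between the geometry and the combinatorics, namely verifying that the visible connected sum corresponds exactly to amalgamation of Tait graphs at a cut vertex and that the summand diagrams genuinely inherit the \emph{orientable} chessboard surface rather than merely an alternating one. The remaining points, that $D$ may be taken reduced and connected and that the capping preserves the checkerboard coloring, are routine.
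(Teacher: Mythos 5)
Your proof is correct and takes essentially the same route as the paper: both invoke Menasco's theorem to realize the connected sum visibly in the alternating diagram as $D = D_0 \# D_1$ and then verify that the summand diagrams inherit specialness, a step the paper dismisses as ``straightforward to check.'' Your bipartite-Tait-graph argument (the decomposing circle splits $G$ at a cut vertex, so $G_0$ and $G_1$ are bipartite subgraphs of the bipartite graph $G$) is a valid way to carry out that final check.
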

	
	\begin{proof}
		Let $D$ be an alternating diagram of $K$. A theorem of Menasco implies that $K_i$ has an alternating diagram $D_i$ for $i=0,1$ such that $D = D_0 \# D_1$ \cite[Theorem 1]{m84}. It is straightforward to check that each of the $D_i$ is special if $D$ is.
	\end{proof}
	
	\begin{cor}
		\label{cor:prime}
		If there exists a special alternating knot which is not band prime, then there exists a prime special alternating knot which is not band prime. \qed
	\end{cor}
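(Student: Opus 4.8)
The plan is to combine the two preceding lemmas by inducting on the number of prime factors in the connected-sum decomposition of the knot. By Schubert's theorem this number is well defined and finite, so it is a legitimate quantity on which to run the induction.

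First I would dispose of the base case. A special alternating knot with a single prime factor is itself prime; if such a knot fails to be band prime, then it is already the desired example, and there is nothing further to prove. For the inductive step, I would assume the statement holds for all special alternating knots with fewer than $n$ prime factors, and let $K$ be a special alternating knot with $n \ge 2$ prime factors that is not band prime. Since $K$ is composite, it admits a nontrivial decomposition $K = K_0 \# K_1$ with both factors nontrivial. \Cref{lem:decompose2} then ensures that each $K_i$ is again special alternating, and \Cref{lem:decompose} ensures that at least one of the two factors --- say $K_i$ --- fails to be band prime. Because each $K_i$ has strictly fewer than $n$ prime factors, the inductive hypothesis applied to this $K_i$ produces a prime special alternating knot which is not band prime, completing the step.

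I do not expect a genuine obstacle here, as the entire content is already carried by \Cref{lem:decompose} and \Cref{lem:decompose2}; the corollary is essentially a bookkeeping consequence of running these two lemmas in tandem down the connected-sum tree. The only point meriting a moment's care is that the two properties of interest --- being special alternating and failing to be band prime --- descend to the \emph{same} factor simultaneously, which is precisely what the joint application of the two lemmas guarantees, together with the observation that the induction must terminate at a prime knot since the number of prime factors strictly decreases at each stage.
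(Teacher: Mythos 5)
Your proof is correct and is exactly the argument the paper intends: the corollary is stated with an immediate \qed precisely because it follows by iterating Lemma~\ref{lem:decompose} and Lemma~\ref{lem:decompose2} down the prime decomposition, which is what your induction formalizes. No issues.
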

	
	
	\begin{proof}[Proof of Theorem \protect\ref{thm:main}]
		Let $K \subset S^3$ be a special alternating knot, and suppose toward a contradiction that $K$ decomposes as a non-trivial band sum $K_0 \#_b K_1$. By \Cref{cor:prime}, we may assume $K$ is prime.
		
		Gabai showed that there exists a minimal genus Seifert surface $S$ for $K$ and a properly embedded arc $\alpha \subset S$, coinciding with a co-core of $b$, such that $S' = S - \eta(\alpha)$ is a Seifert surface for the split link $K_0 \sqcup K_1$ \cite[Proof of Theorem 1]{gab87}. Here $\eta$ denotes a regular open neighborhood. 
		
		Hirasawa-Sakuma proved any minimal genus surface of a special alternating knot is a chessboard surface of an alternating diagram of that knot \cite[Theorem 1.1]{hs97}.
		Accordingly, we choose an alternating diagram $D \subset S^2$ of $K$ admitting $S$ as a chessboard surface.
		Let $G \subset S^2$ be the corresponding Tait graph, and let $\lambda_S$ denote the symmetrized Seifert pairing on $H_1(S)$.
		Since $S$ is a chessboard surface and $D$ is alternating, Gordon and Litherland proved in this case that $(H_1(S),\lambda_S)$ is isometric, up to an overall sign, to the lattice $\cF(G)$ of integer-valued flows on $G$ \cite[Theorem 1]{gl78}.
		In particular, $\lambda_S$ is definite.
		
		A Mayer-Vietoris argument shows the inclusion-induced map $H_1(S') \to H_1(S)$ injects, so it follows that $\lambda_{S'}$ is definite as well.
		Hence both $S$ and $S'$ are incompressible, by \cite[Corollary 3.2]{g17}.
		
		Let $F \subset S^3$ be a 2-sphere separating $K_0$ and $K_1$, chosen so $F$ and $S'$ are in general position and $F \cap S'$ has the minimal number of components. A standard innermost curve argument, using the incompressibility of $S'$, shows $F \cap S' = \varnothing$.
		Thus $S'$ has two components which are separated by the sphere $F$; we denote these by $S_0$ and $S_1$, so that $\partial S_i = K_i$. Viewing $S$ as $S_0 \cup_\alpha S_1$, we conclude that the inclusion map $S_0 \sqcup S_1 \hookrightarrow S$ induces a lattice isomorphism $(H_1(S_0),\lambda_{S_0}) \oplus (H_1(S_1),\lambda_{S_1}) \cong (H_1(S),\lambda_S)$.
		
		We claim each of $H_1(S_0)$ and $H_1(S_1)$ are non-zero. Suppose not; then $H_1(S_1)$, say, has rank $0$, so $S_1$ is a disk. In this case the boundary of a regular neighborhood of $S_1$ in $S^3 - K_0$ is a sphere which separates $K_0$ and $K_1$ and meets $S$ in a single arc parallel to $\alpha$, implying $b$ is trivial. This contradiction proves the claim, and it follows that $(H_1(S),\lambda_S)$ admits a non-trivial decomposition as an orthogonal direct sum.

		We have thereby shown that $\cF(G) \cong (H_1(S), \pm\lambda_S)$ admits a non-trivial decomposition as an orthogonal direct sum.
		By a result of Bacher, de la Harpe, and Nagnibeda, it follows that $G$ is {\em separable}: there exist subgraphs $G_0, G_1 \subset G$ with $\mathcal{F}(G_i) \cong (H_1(S_i),\lambda_{S_i})$, such that $G_0 \cup G_1 = G$ and $G_0 \cap G_1$ consists of a single vertex \cite[Proposition 4]{bhpn97}.
		Equivalently, $D$ admits a connected sum decomposition $D= D_0 \# D_1$ with $G_i$ a Tait graph of $D_i$.	
		Let $K_i \subset S^3$ be the knot represented by $D_i$, $i=0,1$.
		Each form $\lambda_{S_i}$ is non-zero and definite, so the signature satisfies $\sigma(K_i) = \sigma(\lambda_{S_i}) \ne 0$, implying each $K_i$ is non-trivial.
		We conclude that $K$ is not prime, a contradiction which proves the theorem.
	\end{proof}
	
	
	\begin{rmk}
		The proof of \Cref{thm:main} shows, in fact, that all non-split special alternating links are band prime. While the cited results of Eudave-Mu\~noz \cite{em92}, Gabai \cite{gab87}, and Miyazaki \cite{miy20} are stated only for knots, the remark in \cite{em92} following the proof of Theorem 2 can be used in place of \cite[Theorem 2]{em92}, and \cite[Proposition 8.9]{sch89} can be used in place of Gabai's work. Similarly, the proof of Miyazaki's result extends to non-split links.
		On the other hand, it would require more effort to extend the result to split special alternating links: one would like to know that the split union of band prime links is again band prime.
		We decided not to pursue this issue, besides to sense that it is non-trivial.
	\end{rmk}
	
	\begin{rmk}
		We had originally attempted to prove \cite[Conjecture 9]{grow23} using the characterization of alternating knots in terms of definite spanning surfaces \cite{g17}.
		However, this approach proved more challenging to implement than expected.
		The use of a definite chessboard surface in the proof of \Cref{thm:main} is the surviving remnant of this idea.
	\end{rmk}
	
	\section{Ribbon Concordance}
	\label{sec:three}
	
	In light of Corollary \ref{cor:concord}, Theorem \ref{thm:main} provides evidence that special alternating knots are ribbon concordance minimal. In this section we prove Propositions \ref{thm:ev_one} and \ref{thm:ev_two}, which provide more support for this conjecture.	
	\begin{proof}[Proof of \protect\Cref{thm:ev_one}]
		The relation $K' \leq K$ implies $\sigma(K') = \sigma(K)$.
		Zemke showed that for any ribbon concordance from $K$ to $K'$, the induced map $\widehat{HFK}(K') \to \widehat{HFK}(K)$ is injective \cite[Theorem 1.1]{zem19}.
		Since knot Floer homology detects genus, it follows that $g(K') \le g(K)$ \cite[Theorem 1.3]{zem19}.
		As in the proof of \Cref{thm:main}, since $K$ is special alternating, it satisfies $|\sigma(K)| = 2g(K)$. Thus
		$$
		|\sigma(K)| = |\sigma(K')| \leq 2g(K') \leq 2g(K) = |\sigma(K)|,
		$$
		and $g(K) = g(K')$. Additionally, because $K$ is alternating, $K$---and therefore $K'$---are homologically thin. 
		Letting $\Delta$ denote the Alexander polynomial, we obtain
		$$
		\text{span}(\Delta_{K'}) = 2g(K') = 2g(K) = \text{span}(\Delta_K).
		$$
		Next, we use the fact that $K' \leq K$ implies $\Delta_{K'}$ divides $\Delta_K$ \cite{gil84, fp20}. Since $\text{span}(\Delta_{K'}) =  \text{span}(\Delta_K)$, we have $m\Delta_{K'} = \Delta_K$ for some $m \in \Z$. Since $\Delta_{K}(1) = \Delta_{K'}(1) = 1$, we have $m = 1$ and $\Delta_{K'} = \Delta_K$. Finally, because $K'$ and $K$ are homologically thin and supported in the same $\delta$-grading, their knot Floer homology groups are determined by their Alexander polynomials. Thus,
		\[
		\widehat{HFK}(K') \cong \widehat{HFK}(K). \qedhere
		\]
	\end{proof}
	
	Before proving \Cref{thm:ev_two}, we review some terminology.
	Given a knot $K$, we let $C$ denote the commutator subgroup of the knot group $\pi_1(S^3 - K)$. The lower central series $\{\gamma_i(C)\}_{i \geq 1}$ of $C$ is defined recursively by
	\begin{align*}
		\gamma_1(C) &= C, \\
		\gamma_n(C) &= [G, \gamma_{n - 1}(G)],
	\end{align*}
	and for a limit ordinal $\omega$ we define $\gamma_\omega(G) := \bigcap_{\alpha < \omega} \gamma_\alpha(G)$. We say $C$ is {\em transfinitely nilpotent} if its lower central series is eventually trivial, and we say $K$ is transfinitely nilpotent if $C$ is. Free groups give the simplest examples of groups which are transfinitely nilpotent but not nilpotent \cite[Chapter 5]{mks04}.
	
	If $X$ denotes the universal abelian cover of $K$, then Milnor defined a quadratic form $\mu$ on $H^1(X; \Q)$ with rank rk$(\mu) = \text{span}(\Delta_K)$ and signature $\sigma(\mu) = \sigma(K)$ \cite{mil68}.
	Following Gordon, we say $K$ is {\em $\Q$-anisotropic} if $H^1(X; \Q)$ contains no non-trivial subspace which is annihilated by $\mu$ and invariant under deck transformations \cite[Proposition 4.5]{gor81}.
	
	\begin{lemma}
		\label{lem:qanis}
		Special alternating knots are $\bQ$-anisotropic.
	\end{lemma}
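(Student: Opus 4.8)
The plan is to show that for a special alternating knot $K$ the Milnor form $\mu$ is \emph{definite}; anisotropy then follows for free, because a definite form has no non-zero isotropic vectors and hence no non-trivial subspace annihilated by $\mu$ at all---so the invariance-under-deck-transformations clause in the definition of $\Q$-anisotropy is never even tested.

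First I would collect the two numerical inputs. Since $K$ is alternating it is homologically thin, so $\text{span}(\Delta_K) = 2g(K)$, exactly as used in the proof of \Cref{thm:ev_one}. Next, as established in the proof of \Cref{thm:main}, the symmetrized Seifert pairing $\lambda_S$ on a chessboard surface $S$ of a special alternating diagram of $K$ is definite. Because $H_1(S)$ has rank $2g(K)$ and $\sigma(K) = \sigma(\lambda_S)$, definiteness of $\lambda_S$ forces $|\sigma(K)| = 2g(K)$.

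I would then feed these into the defining properties of Milnor's form recalled above. As $\text{rk}(\mu) = \text{span}(\Delta_K) = 2g(K)$ and $\sigma(\mu) = \sigma(K)$, we get
$$
|\sigma(\mu)| = |\sigma(K)| = 2g(K) = \text{rk}(\mu).
$$
A symmetric form over $\Q$ whose signature has absolute value equal to its rank is definite, so $\mu(v,v) \ne 0$ for every non-zero $v \in H^1(X;\Q)$. Consequently the only subspace of $H^1(X;\Q)$ on which $\mu$ vanishes is the zero subspace, and in particular there is no non-trivial deck-invariant subspace annihilated by $\mu$. Thus $K$ is $\Q$-anisotropic.

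I do not expect a genuine obstacle here: the argument is elementary by design, and the entire content is the observation that definiteness of $\mu$ implies anisotropy trivially. The only points needing care are the two identifications $\text{rk}(\mu) = 2g(K)$ and $|\sigma(\mu)| = 2g(K)$, both of which rest on facts already in hand---homological thinness of alternating knots and definiteness of the chessboard Seifert pairing from the proof of \Cref{thm:main}.
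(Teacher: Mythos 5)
Your proposal is correct and follows essentially the same route as the paper: both establish $|\sigma(\mu)| = \mathrm{rk}(\mu)$ via the chain $|\sigma(K)| = 2g(K) = \mathrm{span}(\Delta_K)$, conclude that $\mu$ is definite, and observe that definiteness rules out any non-trivial annihilated subspace before deck-invariance is even needed. You merely spell out in more detail the two inputs (homological thinness and the definite chessboard pairing) that the paper cites implicitly.
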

	
	\begin{proof}
		If $K$ is a special alternating knot, then we have
		$$
		|\sigma(\mu)| = |\sigma(K)| = \text{span}(\Delta_K) = \text{rk}(\mu),
		$$
		so $\mu$ is a definite form. It follows that there is no non-trivial subspace of $H^1(X; \Q)$ annhilated by $\mu$, let alone one which is invariant under deck transformations.
	\end{proof}
	
	\begin{prop}
		\label{prop:tn}
		If $K$ is a special alternating knot and $K$ is transfinitely nilpotent, then $K$ is ribbon concordance minimal.
	\end{prop}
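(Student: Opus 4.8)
The plan is to combine \Cref{lem:qanis} with Gordon's criterion for ribbon concordance minimality, which I would recall exactly as stated in the introduction: in \cite[Theorem 1.3]{gor81}, Gordon proved that any knot which is simultaneously $\Q$-anisotropic and transfinitely nilpotent is ribbon concordance minimal. Since the hypotheses of the proposition furnish transfinite nilpotence directly, the only remaining task is to supply the $\Q$-anisotropy.

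First I would invoke \Cref{lem:qanis}, which asserts that every special alternating knot is $\Q$-anisotropic. The knot $K$ is special alternating by assumption, so the lemma applies verbatim and shows that $K$ is $\Q$-anisotropic. At this point both hypotheses of Gordon's theorem are in hand: $\Q$-anisotropy from \Cref{lem:qanis}, and transfinite nilpotence from the standing assumption.

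I would then apply \cite[Theorem 1.3]{gor81} to conclude that $K$ is ribbon concordance minimal, which finishes the proof. There is no substantive obstacle here: the statement is essentially a bookkeeping assembly of \Cref{lem:qanis} and Gordon's theorem, and the only real content is confirming that the two inputs to Gordon's criterion are both available. Any genuine difficulty lies upstream---in the elementary definiteness argument underlying \Cref{lem:qanis}, and in Gordon's original proof of his minimality criterion---neither of which needs to be revisited here.
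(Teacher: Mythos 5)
Your proof is correct and is exactly the paper's first argument: apply \Cref{lem:qanis} to get $\Q$-anisotropy and then invoke \cite[Theorem 1.3]{gor81}. The paper also notes an alternative route via \cite[Lemma 3.4]{gor81} and \Cref{thm:ev_one}, but your approach matches its primary proof.
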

	
	\begin{proof}
		Gordon proved that any knot which is $\Q$-anisotropic and transfinitely nilpotent is ribbon concordance minimal \cite[Theorem 1.3]{gor81}.
		Hence the result follows from \Cref{lem:qanis}.
		
		Alternatively, Gordon proved that if $K' \leq K$, $K$ is transfinitely nilpotent, and span$(\Delta_K) = \text{span}(\Delta_{K'})$, then $K' \simeq K$ \cite[Lemma 3.4]{gor81}. 
		Hence the result follows from \Cref{thm:ev_one}.
	\end{proof}

	\begin{proof}[Proof of \protect\Cref{thm:ev_two}]
		If $K$ satisfies any of the listed hypotheses, then $K$ is transfinitely nilpotent:
		\begin{enumerate}
			\item
			follows because commutator subgroups of fibered knots are free,
			\item
			is a classical result of Mayland and Murasugi \cite[Theorem A]{mm76}, and
			\item
			was proven more recently by Johnson \cite[Corollary 1.3 and Remark 1.20]{joh21}.
		\end{enumerate}
		Hence the result follows from \Cref{prop:tn}.
	\end{proof}


	\bibliography{main_bib}{}
	\bibliographystyle{amsplain}
	
\end{document}